 \definecolor{cupgreen}{rgb}{0,0.498,0.208}
  \definecolor{cupblue}{rgb}{0,0,.5}
  \definecolor{cupred}{rgb}{1,0.04,0}
  \definecolor{cuppink}{rgb}{0.925,0,0.545}
  \definecolor{cupmagenta}{rgb}{0.624,0.161,0.424}
  \definecolor{cupbrown}{rgb}{0.71,0.212,0.133}
  \definecolor{cupgreen}{rgb}{0,0,0}
  \definecolor{cupblue}{rgb}{0,0,0}
  \definecolor{cupred}{rgb}{0,0,0}
  \definecolor{cuppink}{rgb}{0,0,0}
  \definecolor{cupmagenta}{rgb}{0,0,0}
  \definecolor{cupbrown}{rgb}{0,0,0}
\definecolor{TITLE}{rgb}{0,0,0}
\definecolor{AUTHOR1}{rgb}{0.00,0.59,0.00}
\definecolor{AUTHOR2}{rgb}{0.50,0.00,1.00}
\definecolor{SECTION}{rgb}{0.50,0.00,1.00}
\definecolor{FOOTTITLE}{rgb}{0.00,0.50,0.75}
\definecolor{THM}{rgb}{0.8,0,0.1}
\definecolor{SEC}{rgb}{0,0,1}
\newtheorem{theorem}{{\color{THM} Theorem}}[section]
\newtheorem{lemma}[theorem]{{\color{THM}Lemma}}
\theoremstyle{definition}
\newtheorem{conjecture}[theorem]{{\color{THM}Conjecture\ }}
\numberwithin{equation}{section}
\begin{document}

\newbox\Adr
\setbox\Adr\vbox{
\centerline{\sc hamed ghasemian zoeram$^1$ and daniel yaqubi$^2$  }
\vskip18pt
\centerline{$^{1}$Department of Pure Mathematics, Ferdowsi University of Mashhad}
\centerline{P. O. Box 1159, Mashhad 91775, Iran.}
\centerline{Email: {\tt hamed90ghasemian@gmail.com } } 
\vskip18pt
\centerline{$^{2}$Department of Pure Mathematics, Ferdowsi University of Mashhad}
\centerline{P. O. Box 1159, Mashhad 91775, Iran.}
\centerline{Email: {\tt daniel\_yaqubi@yahoo.es }.}} 

\title{Spanning $k$-ended trees of $3$-regular connected graphs}
\author[ Hamed Ghasemian Zoeram, Daniel Yaqubi ]{\box\Adr}
\keywords{Spanning tree, $k$-ended tree, branch , leaf, $3$-regular graph, connected graph, triangle-free.}
\subjclass[2010]{05C05 ,05C07 .}

\maketitle
\begin{abstract}
 A vertex of degree one is called an \textit{end-vertex} and  the set of end-vertices of $G$ is denoted by $End(G)$. For
a positive integer $k$, a tree $T$ be called  $k$-ended tree if $\mid End(T )\mid\leq k$. In this paper, for 
each $3$-regular connected graph with $\vert G\vert \geqslant 8$, we find a positive integer 
$k$ related to order of $G$, such that $G$ has a spanning $k$-ended tree, and 
with construction a sequence of $3$-regular connected graphs 
which their orders approach to infinity with known minimum possible number
of end-vertices of their spanning trees. At the end, we  given a 
conjecture about spanning $k$-ended trees of $3$-regular connected graphs.
\end{abstract}

\section{Introduction}
Throughout this article we consider only finite undirected laballed graphs without loops or multiple edges. The vertex set and edge set of graph  $G$ is denoted by $V=V (G)$ and $E=E(G)$, respectively. For $u, v \in V$ An edge joining two vertices $u$ and $v$ is denoted by $uv$ or $vu$. The neighbourhood $N_G(v)$ or $N(v)$ of vertex $v$  is the set of all $u \in V$ which are
adjacent to $v$. The degree of a vertex $v$, denoted by $\deg_G(u)=|N_G(v)|$.
The minimum degree of a graph  $G$  is denoted  $\delta(G)$ and the maximum degree is denoted $\Delta (G)$. 
 If all vertices of $G$ have same degree $k$, then the graph $G$ is called $k$-\textit{regular}.\\ The distance of $u$ and $v$,
denoted by $d_G(u, v) $ or $d(u, v)$, is the length of a shortest path between $u$ and $v$.  A \textit{Hamiltonian path} 
of a graph is a path passing through all vertices of the graph. A graph is \textit{Hamiltonian-connected} if every two 
vertices are connected with a Hamiltonian path. In graph $G$, an \textit{independent set} is a subset $S$ of $V(G)$ such that no two 
vertices in $S$ are adjacent. A maximum independent set is an independent set of largest possible size for a given graph $G$, 
This size is called the independence number of G, that denoted by $\alpha(G)$.  \\
 A vertex of degree one is called an \textit{end-vertex}, and the set of end-vertices of $G$ is denoted by $End(G)$. If
 $T$ be tree, an end-vertex of a $T$ is usually called a \textit{leaf} of $T$ and the set of leaves of $T$ is denoted by $leaf(T)$. A
spanning tree is called independence if $End(G)$ is independent in $G$. For
a positive integer $k$, a tree $T$ is said to be a $k$-ended tree if $\mid End(T )\mid\leq k.$ . We define $\sigma_{k}(G) = \min \{d(v_1)+\ldots+d(v_k) \mid \{v_1, \ldots , v_k \}
\textit{ is an independent set in G}\}$. Clearly, $\sigma_{1}(G) = \delta(G)$.


By using $\sigma_2(G)$, \textit{Ore} \cite{Ore} obtain the following famous theorem on Hamiltonian path. Notice that a Hamiltonian path is spanning $2$-ended tree.  
\begin{theorem} \cite{Ore}
Let $G$ be a connected graph, if $\sigma_2(G)\geq |G|-1$, then $G$ has Hamiltonian path.
\end{theorem}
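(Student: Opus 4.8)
The plan is to run the classical longest-path argument. First I would dispose of the degenerate case: if $G$ has no pair of non-adjacent vertices then $G$ is complete and trivially has a Hamiltonian path, so I may assume $\sigma_2(G)$ is attained by an actual non-adjacent pair, and the hypothesis then reads $\deg_G(u)+\deg_G(v)\ge |G|-1$ whenever $uv\notin E(G)$. Write $n=|G|$ and let $P=v_1v_2\cdots v_k$ be a \emph{longest} path in $G$. Since $P$ admits no extension at either end, every neighbour of $v_1$ and every neighbour of $v_k$ lies on $P$. The whole point is to show $k=n$.

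I would then split on whether $v_1v_k\in E(G)$. If $v_1v_k\in E(G)$, then $C=v_1v_2\cdots v_kv_1$ is a cycle on $V(P)$; if $k<n$, connectedness of $G$ yields an edge $wv_j$ with $v_j\in V(C)$ and $w\notin V(P)$, and then $w$ followed by a Hamiltonian path of $C$ starting at $v_j$ is a path on $k+1$ vertices, contradicting the maximality of $P$. Hence $k=n$ and $C$ with one edge removed is the required Hamiltonian path.

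The main case is $v_1v_k\notin E(G)$, which is where the degree condition enters: $\deg_G(v_1)+\deg_G(v_k)\ge n-1\ge k-1$. Here I would use the standard rotation/pigeonhole trick. Put $A=\{\,i:v_1v_i\in E(G)\,\}$ and $B=\{\,i:v_{i-1}v_k\in E(G)\,\}$; since all neighbours of $v_1$ and $v_k$ lie on $P$ and the graph has no loops, $A$ and $B$ are subsets of $\{2,3,\ldots,k\}$ with $|A|=\deg_G(v_1)$ and $|B|=\deg_G(v_k)$. If $A\cap B=\emptyset$ then $\deg_G(v_1)+\deg_G(v_k)=|A|+|B|\le k-1\le n-1$, so the whole chain of inequalities collapses and $k=n$, i.e. $P$ is already a Hamiltonian path. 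If instead some $i\in A\cap B$, then $v_1v_i\in E(G)$ and $v_{i-1}v_k\in E(G)$, so $v_1v_2\cdots v_{i-1}v_kv_{k-1}\cdots v_iv_1$ is a cycle on $V(P)$, and I finish exactly as in the previous paragraph: connectedness together with maximality of $P$ force $k=n$.

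I expect the only delicate points to be purely bookkeeping: getting the index shift in the definition of $B$ right so that $|B|=\deg_G(v_k)$ and $B\subseteq\{2,\ldots,k\}$; checking that the rerouted walks really are paths/cycles with the claimed number of vertices (there is no degeneracy precisely because $v_1v_k\notin E(G)$ rules out $2\in B$ and $k\in A$); and noting $k\le n$ so that the chain $n-1\le \deg_G(v_1)+\deg_G(v_k)\le k-1\le n-1$ pins down $k=n$. None of this is deep; the one genuinely clever ingredient is the rotation that converts the absence of the chord $v_1v_k$ into a spanning cycle of $P$.
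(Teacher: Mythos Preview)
Your argument is the standard longest-path/rotation proof of Ore's Hamiltonian-path theorem and it is correct as written; the index bookkeeping on the sets $A$ and $B$ is handled properly, and the two cases (cycle-on-$V(P)$ versus disjoint $A,B$) both force $k=n$ exactly as you claim.

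There is, however, nothing to compare against: the paper does not supply its own proof of this theorem. It is stated in the introduction as a classical result of Ore, with a citation, and is used only as historical motivation for the notion of spanning $k$-ended trees. So your write-up is a complete self-contained proof of a statement the paper merely quotes.
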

 The following theorem of  \textit{Las Vergnas Broersma and
Tuinstra} \cite{Bon} gives a similar sufficient condition for a graph $G$ to have a spanning $k$-ended tree. 
\begin{theorem}\cite{Bro}
Let $k\geq2$ be an integer, and let $G$ be a connected graph. If $\sigma_2(G)\geq |G|-k+1$, then $G$ has a spanning $k$-ended tree.
\end{theorem}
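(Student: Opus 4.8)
The plan is to argue by contradiction via an extremal spanning tree. Suppose $G$ has no spanning $k$-ended tree, and among all spanning trees of $G$ pick one, $T$, with the fewest leaves; write $m:=|leaf(T)|$, so $m\geq k+1$, and the aim is to reach a contradiction with $\sigma_2(G)\geq|G|-k+1$. The first step is to show that $leaf(T)$ is an independent set of $G$. If two leaves $u,v$ were adjacent in $G$, then $T+uv$ has a unique cycle $C$, namely $uv$ together with the $T$-path $P$ from $u$ to $v$; since $m\geq 3$ the tree $T$ is not a path, so some interior vertex $b$ of $P$ satisfies $\deg_T(b)\geq 3$. Deleting one of the two edges of $C$ at $b$ yields a spanning tree $T'$, and a short degree count (according to whether the deleted edge is incident to one of $u,v$) gives $|leaf(T')|\leq m-1$: at least one of $u,v$ loses leaf status, $b$ keeps degree $\geq 2$, and the only possible new leaf, at the far end of the deleted edge, is outweighed. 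This contradicts the choice of $T$, so $leaf(T)$ is independent.

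The second step draws out the degree consequences. Since $leaf(T)$ is independent and $G$ is loopless, $N_G(x)\cap leaf(T)=\emptyset$ for every leaf $x$, so $N_G(x)\subseteq V(G)\setminus leaf(T)$, a set of size $|G|-m$. For any two leaves $x,y$ the pair $\{x,y\}$ is independent, whence $\deg_G(x)+\deg_G(y)\geq\sigma_2(G)\geq|G|-k+1$; combining this with $|N_G(x)\cup N_G(y)|\leq|G|-m$ forces $|N_G(x)\cap N_G(y)|\geq m-k+1\geq 2$, so any two leaves of $T$ have at least two common neighbours in $G$. When $|G|\leq k+2$ this is already impossible: writing $a_z$ for the number of leaves adjacent to a non-leaf $z$ and double-counting pairs of leaves with a common neighbour gives $\binom{m}{2}(m-k+1)\leq\sum_z\binom{a_z}{2}\leq(|G|-m)\binom{m}{2}$, hence $2m\leq|G|+k-1$, contradicting $m\geq k+1$. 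For larger $|G|$ a genuine exchange argument is needed. (One can also first peel off low-degree vertices — a vertex of degree $1$, or a vertex of degree $2$ after deletion or suppression, can be reinserted into a spanning $k$-ended tree of the smaller graph without raising the leaf count — which, by induction on $|G|$ with trivial base $|G|\leq k+1$, reduces matters to $\delta(G)\geq 3$; this by itself does not complete the proof.)

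For the third step I fix a leaf $x$, let $b(x)$ be the first vertex of $T$-degree at least $3$ along the pendant path issuing from $x$ (it exists since $m\geq 3$), and take $w\in N_G(x)$ other than the support vertex of $x$. If $w$ is not on that pendant path, the fundamental cycle of $xw$ runs from $x$ up to $b(x)$ and then on toward $w$; deleting from $T+xw$ the edge leaving $b(x)$ toward $w$ produces a spanning tree in which $x$ is no longer a leaf, and this fails to lower the leaf count only if the endpoint of that edge distinct from $b(x)$ has $T$-degree exactly $2$. Applying this — and the symmetric statement for $y$ — to a pair of leaves $x,y$ and a common neighbour $w$ from the second step, and distinguishing whether $w$ lies on the pendant path of $x$, on that of $y$, on both (which forces $w$ to be a branch vertex common to the two pendant paths), or on neither, pins the local structure down tightly. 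A secondary extremal choice among minimum-leaf spanning trees — for instance one minimizing $\sum_{x\in leaf(T)}d_T(x,b(x))$ — should then force one of these exchanges to strictly decrease the number of leaves, contradicting minimality; hence $m\leq k$ and $T$ is a spanning $k$-ended tree of $G$.

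The main obstacle is this last step. The first two steps are routine, but converting ``any two leaves have at least two common neighbours'' into an actual decrease of the leaf count requires careful bookkeeping of where those common neighbours sit relative to the pendant paths, together with a judicious choice of which cycle edge to delete so that no compensating new leaf appears; the stubborn configurations in which the relevant branch-adjacent vertices have $T$-degree exactly $2$ are, I expect, exactly what the secondary minimality condition is meant to eliminate.
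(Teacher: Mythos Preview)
The paper does not prove this theorem; it is quoted from Broersma and Tuinstra as background, so there is no proof in the paper to compare your attempt against.

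That said, your proposal is not a proof. Your first step (independence of $leaf(T)$ for a minimum-leaf spanning tree) is correct and is exactly Lemma~\ref{DD} of the paper; your second step is sound arithmetic and the double-count does dispose of $|G|\le k+2$. But the heart of the matter is your third step, and there you have only a sketch: you say a secondary extremal choice ``should then force'' a leaf decrease and that you ``expect'' the degree-$2$ obstructions to disappear. You have neither specified which secondary invariant to minimise, nor carried out the case analysis showing that in every configuration of the common neighbour $w$ relative to the two pendant paths one can delete a cycle edge without creating a compensating new leaf. The difficulty is real: when the branch-adjacent vertex has $T$-degree exactly $2$, the naive exchange produces a new leaf and the count does not drop, and nothing you have written rules this out. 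Until that bookkeeping is done in full, the argument has a genuine gap precisely at the non-trivial part.
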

\textit{Win } \cite{Win} obtained a sufficient condition related to
independent number for $k$-connected graph that confirms a conjecture of \textit{Las Vergnas Broersma and
Tuinstra} \cite{Bon} gave a degree sum condition for a spanning $k$-ended tree.  \\
\begin{theorem}\cite{Win}
Let $k\geq 2$ and let $G$ be a $m$-connected graph. If $\alpha(G) \leq  m + k -1$, then $G$
has a spanning $k$-ended tree. 
\end{theorem}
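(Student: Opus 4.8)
The plan is to prove the contrapositive via an extremal tree argument, in the spirit of the Chv\'atal--Erd\H{o}s theorem (of which this is the spanning-$k$-ended-tree analogue, with the connectivity $m$ playing the role of $\kappa(G)$). Assume $G$ is $m$-connected, $\alpha(G)\le m+k-1$, and, for contradiction, that $G$ has no spanning $k$-ended tree. Among all subtrees $T$ of $G$ with $|\mathrm{leaf}(T)|\le k$, I would choose one with $|V(T)|$ maximum (such $T$ exist, e.g.\ a single edge, and by assumption $T$ cannot span $G$). Set $L=\mathrm{leaf}(T)$ and $R=V(G)\setminus V(T)\ne\emptyset$.

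First I would extract the easy structural consequences of maximality. If $|L|<k$, then since $G$ is connected some $v\in R$ has a neighbour $w\in V(T)$, and attaching $v$ to $T$ along $vw$ yields a larger subtree with at most $k$ leaves, a contradiction; hence $|L|=k$. Next, no $v\in R$ can be adjacent to a leaf $w\in L$: otherwise attaching $v$ at $w$ produces a larger subtree whose leaf set is $(L\setminus\{w\})\cup\{v\}$, again of size $k$. Thus $N_G(L)\subseteq V(T)\setminus L$, so for any fixed $v_0\in R$ the set $L\cup\{v_0\}$ has no edges between $v_0$ and $L$. The goal is now to manufacture, out of $L$, the vertex $v_0$, and the $m$-connectivity, an independent set of size $m+k>\alpha(G)$.

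The heart of the argument is a sequence of edge-exchange lemmas showing that maximality of $T$ forces $L$ to be independent and forces the ``escape structure'' from $R$ into $T$ to be spread over many pairwise non-adjacent vertices. Concretely: first, if two leaves of $L$ were adjacent, or if a leaf were adjacent to a vertex of $V(T)$ other than its tree-neighbour in a way permitting a re-routing, one combines that re-routing with the attachment of a vertex of $R$ to obtain a larger subtree with $\le k$ leaves, a contradiction; this yields that $L$ is independent. Second, applying the fan version of Menger's theorem to $v_0\in R$ produces $m$ internally disjoint $v_0$--$V(T)$ paths landing at distinct vertices $w_1,\dots,w_m\in V(T)\setminus L$, and, using that the concatenation of two such paths is a $w_i$--$w_j$ detour through $R$ that can be spliced into $T$, one shows the $w_i$ must be mutually ``incompatible'' in a sense forcing $m-1$ of them (or $m-1$ associated vertices) to be non-adjacent to one another and to all of $L\cup\{v_0\}$. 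Assembling $L$, $v_0$, and these $m-1$ vertices gives an independent set of size $k+1+(m-1)=m+k$, contradicting $\alpha(G)\le m+k-1$.

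I expect the splicing/Menger step to be the main obstacle. The delicate case is a leaf of $T$ hanging at the end of a long path of degree-$2$ vertices: a single edge exchange along such a stem removes one leaf but creates another, so it does not by itself enlarge $T$, and one must either perform a coordinated multi-edge exchange or refine the extremal choice of $T$ (maximum order, then fewest leaves, then a suitable stem-length potential) so that the needed exchanges become genuinely improving. Controlling how the $m$ Menger paths may attach to $T$ without ever permitting such an improvement, and reading off the required $m-1$ independent vertices from that control, is the crux.
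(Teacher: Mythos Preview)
The paper does not prove this theorem at all: it is quoted from Win \cite{Win} as a known background result, with no argument supplied. Consequently there is no ``paper's own proof'' to compare your proposal against.

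For what it is worth, your outline is along the right lines for Win's original argument --- choose an extremal subtree with at most $k$ leaves, use its maximality to force the $k$ leaves to be independent and to repel the uncovered vertex $v_0$, and then exploit $m$-connectivity via Menger's theorem to produce additional independent vertices. You correctly identify the delicate part: the splicing step that converts the $m$ internally disjoint $v_0$--$V(T)$ paths into $m-1$ further vertices that are pairwise non-adjacent and non-adjacent to $L\cup\{v_0\}$. As written, your sketch waves at this step (``one shows the $w_i$ must be mutually `incompatible'\,'') without carrying it out, and the claimed edge-exchange showing that $L$ is independent is also not proved --- two adjacent leaves of a maximal-order subtree with $\le k$ leaves do \emph{not} immediately yield a contradiction, since joining them creates a cycle but does not by itself let you absorb a vertex of $R$. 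So while the strategy is sound, the proposal is a plan rather than a proof; the actual work in Win's paper lies precisely in the two places you flag as obstacles.
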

A closure operation is useful in the study of existence of Hamiltonian cycles, Hamiltonian path and other spanning subgraphs in graph. 
It was first introduced by \textit{Bondy and Chavatal}.
\begin{theorem}\cite{ Bon}
Let $G$ be a graph and let $u$ and $v$ be two nonadjacent vertices of $G$ then,\\
$(1)$. Suppose $\deg_G(u) +\deg_G(v)\geq |G|$. Then $G$ has a Hamiltonian cycle if and only if $G +uv$ has a Hamiltonian cycle.\\
$(2)$.  Suppose $\deg_G(u) +\deg_G(v)\geq |G|-1$. Then $G$ has a Hamiltonian path if and only if $G +uv$ has a Hamiltonian path.
\end{theorem}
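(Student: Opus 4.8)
The plan is to prove both parts by the same rotation-and-pigeonhole argument, deriving $(2)$ from $(1)$. In each part the forward implication is immediate, since $G$ is a spanning subgraph of $G+uv$, so any Hamiltonian cycle (resp.\ path) of $G$ is already one of $G+uv$. Thus the content lies in the reverse implication, and the degree hypothesis will be used exactly once, in a counting step.

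For part $(1)$ I would argue as follows. Suppose $G+uv$ has a Hamiltonian cycle $C$. If $uv\notin E(C)$ then $C\subseteq G$ and we are finished, so assume $uv\in E(C)$; deleting $uv$ from $C$ leaves a Hamiltonian path of $G$ with ends $u$ and $v$, say $v_1v_2\cdots v_n$ with $v_1=u$ and $v_n=v$. I then set
\[
 S=\{\, i : 1\le i\le n-1,\ v_1v_{i+1}\in E(G)\,\},\qquad
 T=\{\, i : 1\le i\le n-1,\ v_iv_n\in E(G)\,\}.
\]
Because $uv=v_1v_n\notin E(G)$, every neighbour of $u$ along the path is of the form $v_{i+1}$ with $i\le n-2$, so $|S|=\deg_G(u)$, and likewise $|T|=\deg_G(v)$. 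Both $S$ and $T$ lie in the $(n-1)$-element set $\{1,\dots,n-1\}$, while $|S|+|T|=\deg_G(u)+\deg_G(v)\ge n$, so $S\cap T\neq\emptyset$; choosing $j\in S\cap T$, the edges $v_1v_{j+1}$ and $v_jv_n$ belong to $G$, and $v_1v_2\cdots v_j\,v_nv_{n-1}\cdots v_{j+1}\,v_1$ is a Hamiltonian cycle of $G$.

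For part $(2)$ I would reduce to part $(1)$ by attaching an apex vertex. Let $w$ be a new vertex and let $H$ be obtained from $G$ by joining $w$ to every vertex of $G$. Then $|H|=|G|+1$, the vertices $u,v$ remain nonadjacent in $H$, and $\deg_H(u)+\deg_H(v)=\deg_G(u)+\deg_G(v)+2\ge(|G|-1)+2=|H|$. For any graph $F$ on $V(G)$, $F$ has a Hamiltonian path if and only if the graph $F^{+}$ obtained from $F$ by joining one new vertex to all of $V(F)$ has a Hamiltonian cycle (prepend/append the apex to a spanning path; conversely delete the apex from a spanning cycle). Applying this to $F=G$ and to $F=G+uv$, and noting $(G+uv)^{+}=H+uv$, part $(1)$ applied to $H$ yields: $G$ has a Hamiltonian path $\iff H$ has a Hamiltonian cycle $\iff H+uv$ has a Hamiltonian cycle $\iff G+uv$ has a Hamiltonian path.

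I expect the only delicate point to be the index bookkeeping in part $(1)$: one must set up $S$ and $T$ so that they truly sit inside a common set of size $n-1$, which is precisely where nonadjacency of $u$ and $v$ in $G$ enters; without it the pigeonhole estimate would be off by one and the conclusion would fail. Beyond that the argument is purely combinatorial, and iterating it over all eligible nonadjacent pairs is exactly what produces the Bondy--Chv\'atal closure used in the sequel.
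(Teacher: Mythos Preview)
Your argument is correct: the rotation/pigeonhole proof of part~$(1)$ is the standard one, and the apex-vertex reduction for part~$(2)$ is a clean and valid way to derive it from~$(1)$. The index bookkeeping is handled properly, and nonadjacency of $u$ and $v$ is used exactly where you say.

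However, there is nothing to compare against: the paper does not prove this theorem at all. It is quoted as a classical result of Bondy and Chv\'atal with a citation to~\cite{Bon}, and no argument is supplied. So while your write-up is fine as a self-contained proof of the closure lemma, it goes beyond what the paper itself provides.
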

After \cite{Bon}, many researchers have defined other closure concepts for various graph properties.
More on $k$-ended tree and spanning tree can be found in \cite{Aki, Czy, Oze, Sal}.

\section{Our Results}
\begin{lemma}\label{D}
Let $T$ be a tree with $n$ vertices such that $\Delta(T)\leq 3$. If $\mid leaf(T)\mid=k$  and  $p$ be the number of vertices of degree $3$ in $T$,  then $k=p+2$.
\end{lemma}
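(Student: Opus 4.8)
The plan is to prove this by a straightforward double-counting argument on vertex degrees. First I would use the fact that a tree $T$ on $n$ vertices has exactly $n-1$ edges, so by the handshake lemma $\sum_{v\in V(T)}\deg_T(v)=2(n-1)=2n-2$. Since $\Delta(T)\leq 3$ and $T$ is connected with $n\geq 2$ vertices, every vertex has degree $1$, $2$, or $3$; writing $n_i$ for the number of vertices of degree $i$, we have $n_1=|\mathrm{leaf}(T)|=k$ and $n_3=p$.

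Next I would set up the two linear relations $n_1+n_2+n_3=n$ (counting all vertices) and $n_1+2n_2+3n_3=2n-2$ (the degree sum). Subtracting the first equation from the second eliminates both $n_1$ partially and removes $n$ after rearrangement: one gets $n_2+2n_3=n-2$, hence $n_2=n-2-2n_3$. Plugging this into $n_1=n-n_2-n_3$ yields $n_1=n-(n-2-2n_3)-n_3=n_3+2$, that is, $k=p+2$, which is exactly the claim.

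The only points needing a word of care are the degenerate cases. If $n=1$ the single vertex has degree $0$ and there is nothing to prove (and the hypothesis that $T$ has leaves rules this out); for $n\geq 2$ every tree has at least two leaves, so $k\geq 2$ and $p\geq 0$, in agreement with $k=p+2$. I do not expect any genuine obstacle here: the statement is a pure counting identity, and the argument above is essentially the whole proof — the subtraction of the two degree/vertex equations is the one step that does all the work.
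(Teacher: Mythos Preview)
Your proof is correct. The double-counting via the handshake lemma is entirely valid: from $n_1+n_2+n_3=n$ and $n_1+2n_2+3n_3=2n-2$ one immediately obtains $n_1-n_3=2$, i.e.\ $k=p+2$.

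The paper, by contrast, proves this by induction on $p$ (the number of degree-$3$ vertices), presumably by observing that the base case $p=0$ is a path with two leaves, and that removing a degree-$3$ vertex (or contracting one of its incident edges) reduces both $k$ and $p$ by one. Your route is genuinely different: it replaces an inductive step with a single global identity, so it is shorter, handles all cases at once, and makes transparent that the hypothesis $\Delta(T)\le 3$ is only used to ensure no vertex contributes more than $3$ to the degree sum. The inductive argument, on the other hand, generalizes more naturally if one ever wants to track how the leaf count changes under local modifications of the tree. For the purposes of this lemma, your approach is the cleaner of the two.
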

\begin{proof}
It is easy by the induction on $p$.
\end{proof}
\begin{lemma}\label{DD}
Let $G$ be a labelled graph and $k\geq3$ is the smallest integer such that G has
a spanning tree with k leaves  like $T$, then no two leaves of $T$ are adjacent in $G$.
\end{lemma}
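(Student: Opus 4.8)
\textit{Proof proposal.} The plan is a proof by contradiction based on a single edge-swap. Suppose $T$ is a spanning tree of $G$ with exactly $k\geq 3$ leaves, $k$ being the smallest possible number of leaves of a spanning tree of $G$, and suppose, for contradiction, that two leaves $u,v$ of $T$ satisfy $uv\in E(G)$. Since $k\geq 3$, the tree $T$ is not a single edge, so in fact $uv\notin E(T)$; hence adjoining $uv$ yields a graph $T_1:=T+uv$ containing a unique cycle $C$, obtained from the $u$--$v$ path $P:\ u=p_0p_1\cdots p_\ell=v$ of $T$ by closing it with the edge $uv$ (and $\ell\geq 2$, so $P$ has at least one internal vertex). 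The first thing I would record is the elementary observation that passing from $T$ to $T_1$ only raises $\deg u$ and $\deg v$ from $1$ to $2$ and changes no other degree, so $\mathrm{leaf}(T_1)=\mathrm{leaf}(T)\setminus\{u,v\}$ and $T_1$ has exactly $k-2$ leaves.

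Next I would split on the degrees of the internal vertices $p_1,\dots,p_{\ell-1}$ of $P$, each of which has $T$-degree at least $2$. If all of them have degree exactly $2$ in $T$, then, since $p_0=u$ and $p_\ell=v$ are leaves, every vertex of $\{p_0,\dots,p_\ell\}$ has all of its $T$-neighbours inside this set; hence the induced subgraph $T[\{p_0,\dots,p_\ell\}]$ is a union of components of $T$, and being connected (it is the path $P$) it is a single component, so $T=P$ is a path and $|\mathrm{leaf}(T)|=2<3\leq k$, a contradiction. Therefore some internal vertex $w=p_j$ has $\deg_T(w)\geq 3$. Now let $e$ be either of the two edges of $C$ incident with $w$, with other endpoint $w'$, and set $T':=T_1-e=T+uv-e$. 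Since $e$ lies on the cycle $C$, $T'$ is again a spanning tree of $G$, and deleting $e$ affects only the degrees of $w$ and $w'$. Because $\deg_{T'}(w)=\deg_T(w)-1\geq 2$, the vertex $w$ is not a leaf of $T'$, so the only vertex that can become a new leaf is $w'$; hence $\mathrm{leaf}(T')\subseteq\mathrm{leaf}(T_1)\cup\{w'\}$ and $|\mathrm{leaf}(T')|\leq(k-2)+1=k-1<k$, contradicting the minimality of $k$. This contradiction shows that no two leaves of $T$ are adjacent in $G$.

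The only point that needs a little care is the bookkeeping at the very end when $w'\in\{u,v\}$: in that case $w'$ really does revert to being a leaf of $T'$, but it was already a leaf of $T$, so the bound $|\mathrm{leaf}(T')|\leq k-1$ is unaffected and the argument still goes through (so one does not even need to choose the edge $e$ cleverly). I expect the conceptual crux to be the dichotomy on the internal vertices of $P$: the only configuration in which the edge-swap fails to strictly decrease the number of leaves is when $P$ contains no vertex of degree $\geq 3$, and that configuration forces $T$ to be a (Hamiltonian) path with just two leaves — which is exactly what the hypothesis $k\geq 3$ rules out.
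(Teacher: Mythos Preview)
Your proof is correct and follows essentially the same approach as the paper: add the edge $uv$ to create a unique cycle, locate a vertex on that cycle of $T$-degree at least $3$, and delete an incident cycle edge to obtain a spanning tree with at most $k-1$ leaves. The only cosmetic difference is that the paper locates the high-degree cycle vertex by taking a shortest path from an external leaf to the cycle, whereas you argue directly that if no internal vertex of the $u$--$v$ path had degree $\geq 3$ then $T$ would be a Hamiltonian path; your bookkeeping is in fact a bit cleaner.
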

\begin{proof}
Let $T$ be a spanning subtree of $G$ with $k$ leaves. Put $S=\{v_1, v_2, \ldots ,v_k\}$ 
be the set of all leaves of $T$. By contradiction, suppose that $v_1$ and $v_2$  are two leaves in $T$ that adjacent 
in $G$. Consider the garph $T_1= T+ v_1v_2$, then $T_1$ contains a unique cycle as $C:v_1v_2c_1c_2 \ldots c_\ell v_1$  
where $c_i\in G$ for $1\leq i\leq \ell$. Since $k\geq3$ then there exist vertex  $v_s\in G$ such that $v_s\notin C$. 
Let $P$ be the shortest path of vertex $v_s$ to the cycle $C$ such that cross of the cycle $C$ in the vertex $c_j$ 
for $1\leq j\leq\ell$.\\
Now, we omit the edge $c_{j-1}c_j$ of $T_1$, (If $j=1$ put $c_{j-1}=v_2$). Let
 $T_2=T_1-c_{j-1}c_j$. Then $T_2$ is a spanning subtree of $G$ such that
$\deg_{T_2}(c_j)=2$.  The vertices of degree one in spanning subtree $T_2$ is equal to the set $\{v_3, v_4, \ldots ,v_k \}$ either $\{v_3, v_4, \ldots ,v_k, c_{j-1}\}$ . That is a contradict by minimality of $k$.
\end{proof}
\begin{theorem}
Let $G$ be a $3$-regular  connected graph such that $n=|G|\geq 8$. Then $G$ is a $\big\lfloor \frac{2n+4}{9}\big\rfloor$-ended tree.
\end{theorem}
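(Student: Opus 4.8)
The plan is to build a spanning tree $T$ of $G$ with few leaves by a greedy/extremal argument, controlling the number of degree-$3$ vertices of $T$ via Lemma~\ref{D}. By Lemma~\ref{D}, if $T$ is a spanning tree with $\Delta(T)\le 3$ and $p$ vertices of degree $3$, then $|leaf(T)|=p+2$, so it suffices to produce a spanning tree with $\Delta(T)\le 3$ and at most $\big\lfloor\frac{2n+4}{9}\big\rfloor-2$ branch vertices. (Note that since $G$ is $3$-regular, automatically any spanning tree has $\Delta(T)\le 3$, so that constraint is free.) Thus the whole problem reduces to: find a spanning tree whose number of degree-$3$ vertices is at most $\big\lfloor\frac{2n-14}{9}\big\rfloor$, i.e. roughly $\frac{2n}{9}$.

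First I would choose $T$ to be a spanning tree \emph{minimizing the number of leaves} among all spanning trees of $G$; by Lemma~\ref{DD}, if $k:=|leaf(T)|\ge 3$ then the leaf set $S$ is an independent set in $G$. Now count edges of $G$ leaving $S$: since $G$ is $3$-regular, exactly $3k$ edge-endpoints lie in $S$, and since $S$ is independent all $3k$ of these edges go to $V(G)\setminus S$. Each leaf $v\in S$ is attached to $T$ by a single edge to its neighbour, which is a vertex of degree $2$ or $3$ in $T$. I would then analyze the structure of $T$ near its branch vertices: consider the ``reduced tree'' obtained from $T$ by suppressing all degree-$2$ vertices, whose vertices are the $p$ branch vertices and the $k=p+2$ leaves, and whose edges correspond to the bare paths of $T$. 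The key is to show that the paths of $T$ joining branch/leaf vertices cannot all be short, because $G$ is $3$-regular: a short path of $T$ sitting between high-degree structure, together with the third edges at its internal vertices going back into $S$ or creating rerouting opportunities, would let us decrease the number of leaves, contradicting minimality. Quantitatively, I expect an argument showing that the $2p+1$ edges of the reduced tree subtend at least $9$ vertices of $G$ ``per unit of branching,'' giving $n\ge \frac{9}{2}p + (\text{constant})$, hence $p\le\frac{2n-c}{9}$ with $c=14$, and then $k=p+2\le\big\lfloor\frac{2n+4}{9}\big\rfloor$.

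The main obstacle is making the local rerouting argument tight enough to get the constant $9$ in the denominator rather than something weaker. The delicate case analysis concerns a branch vertex $b$ of $T$ whose three incident $T$-paths are all short (say leading quickly to leaves or other branch vertices): one must use the third non-tree edge at each internal vertex of these paths, show that by swapping a tree edge for such a non-tree edge one obtains a spanning tree with strictly fewer leaves (or with the same number of leaves but ``closer'' to a smaller one in some secondary measure), and thereby force each branch vertex to ``own'' a cluster of about $\tfrac92$ vertices disjointly. I would handle the degenerate small cases and the boundary $k\le 2$ (i.e.\ when $G$ has a Hamiltonian path) separately, and I would use the hypothesis $n\ge 8$ to guarantee the vertex $v_s\notin C$ needed to invoke Lemma~\ref{DD} and to make the counting inequalities valid. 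The floor function is absorbed at the very end since $p$ is an integer.
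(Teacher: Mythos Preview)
Your scaffolding is correct and matches the paper: choose a spanning tree $T$ with the minimum number $k$ of leaves, use Lemma~\ref{D} to write $k=p+2$, use Lemma~\ref{DD} to get that the leaf set is independent when $k\ge 3$, and aim for an inequality of the form $n\ge \tfrac{9k}{2}-2$, which rearranges to $k\le\big\lfloor\tfrac{2n+4}{9}\big\rfloor$. Where your proposal has a genuine gap is exactly the step you yourself flag as ``the main obstacle'': you do not give any argument for the factor $\tfrac92$, only a hope that a reduced-tree/rerouting analysis around branch vertices will produce it. As written, nothing in your sketch prevents the bare paths of the reduced tree from being short, so the counting never gets off the ground.

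The paper's mechanism is concrete and, notably, is organised around the \emph{leaves} rather than the branch vertices. Write $\mathcal A_i=\{v:\deg_T(v)=i\}$. First, each leaf $v$ has two non-tree $G$-edges, and their other endpoints must lie in $\mathcal A_2$: not in $\mathcal A_1$ by Lemma~\ref{DD}, and not in $\mathcal A_3$ since a degree-$3$ vertex of $T$ already has all three $G$-edges inside $T$. Moreover each vertex of $\mathcal A_2$ has exactly one non-tree edge, so these two $\mathcal A_2$-neighbours are \emph{private} to $v$. Second, pick one such neighbour $w$; the unique cycle in $T+vw$ reads $v\,w\,v_1\ldots v_s\,v$, and the next vertex $v_1$ must again lie in $\mathcal A_2$, for if $v_1\in\mathcal A_3$ then $T+vw-wv_1$ would be a spanning tree with only $k-1$ leaves. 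A further swap (replace $T$ by $T+vw-wv_1$, which still has $k$ leaves but now has $v_1$ as a leaf) together with Lemma~\ref{DD} shows that $v_1$ is $G$-adjacent to no leaf other than $v$; hence $v_1$ is never one of the two private $\mathcal A_2$-neighbours of any other leaf, and a short case analysis shows it can serve as the ``third'' vertex for at most one other leaf. Assigning to each leaf the triple $\{w,y,v_1\}$ (or $\{w,v_1,m\}$ in the variant case) therefore yields
\[
|\mathcal A_2|\ \ge\ 3k-\tfrac{k}{2}=\tfrac{5k}{2},
\]
and then $n=|\mathcal A_1|+|\mathcal A_2|+|\mathcal A_3|=k+|\mathcal A_2|+(k-2)\ge \tfrac{9k}{2}-2$, which is exactly the inequality you were targeting. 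Your reduced-tree picture is not wrong as intuition, but the actual leverage comes from this ``three $\mathcal A_2$-vertices per leaf, two of them private'' count, which your proposal does not supply.
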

\begin{proof}
For positive integer $k$, let $T$ be a spanning subtree of $G$ with the $k$ leaves such that $k$ is minimum.
If $k=2$ then it is obvious theorem is true, so we suppose $k>2$.\\
Put
\begin{eqnarray*}
 \mathcal{A}_1&=&\{v\in V(G)\quad\vert\quad \deg_T(v)=1\quad \};\\
  \mathcal{A}_2&=&\{v\in V(G)\quad\vert\quad \deg_T(v)=2\quad \};\\
  \mathcal{A}_3&=&\{v\in V(G)\quad\vert\quad \deg(v)=3\quad\}.
\end{eqnarray*}

Let $v$ be a vertex of the set $\mathcal{A}_{1}$. Since $G$ is a $3$-regular graph, 
then there exist two edges as $e_1(=vv_i),e_2(=vv_j)\in E(G)-E(T)$,
adjacent to $v$. By using of lemma ~\ref{DD}, the vertices $v_i$ and $v_j$ belong to the set $\mathcal{A}_2$. 
So, for each
$v\in \mathcal{A}_{1}$ there exist two vertices of the set $\mathcal{A}_{2}$, such that they were adjacent to $v$ in $G$ but not in $T$.\\
Now, choice $v \in \mathcal{A}_{1}$ and consider one of adjacent edges to $v$ like $vw$ that
$w\in \mathcal{A}_2$ and $vw\in E(G)-E(T)$ . The graph
$T+vw$ has a cycle like $vwv_{1} \ldots v_{s}v$ such that $v_{1} \in \mathcal{A}_{2}$ otherwise $v_{1} \in \mathcal{A}_{3}$
and  $T+vw-wv_{1}$ is a spanning subtree of $G$ with $k-1$ leaves, it is impossible.\\
In other hand, no members of $\mathcal{A}_{1}$ except $v$ ( for example $u$) can not be adjacent to $v_{1}$
in $G$ because, then $T_1-wv_{1}$ is a spanning subtree of $G$ with $k$ leaves such in that $v_{1}$
and $u$ have degree one and they are adjacent in $G$, it is contradiction with the lemma ~\ref{DD}.
We can choice the vertex $w$ at above such that the edge $vv_1\notin E(T)$, because if $vv_1\in E(T)$, we 
choice another vertex of the set $\mathcal{A}_2$ like $x$ where $vx\in E(G)-E(T)$. Now, the graph
$T+vx$ contains a cycle like $vxz_1\ldots z_dv$ such that $z_1\neq v_1$ (why). It is  obviously the edge $vz_1\notin E(T)$. So, we can suppose $vv_1 \notin E(T)$.\\
We have two cases:
\begin{itemize}
\item[] \textbf{Case I.} $vv_{1}\notin E(G)$, then we correspond the set $\{v_{1},w, y\}$,($y$ is another vertex 
such that $y\neq w$ and $vy \in E(G)-E(T)$) to $v$.

 \item[] \textbf{Case II.}  If $vv_{1}$ is an edge of $G$ that is not in $T$ then $T+vv_{1}$ has a cycle  like $vv_{1}mu_{1} \ldots u_{\ell}v$ and again like before $m \in A_{2}$. It is true that $m\neq w$ because if not then two cycles $vv_{1}mu_{1} \ldots u_{\ell}v$ an $vwv_{1} \ldots v_{s}v$ must cut each other, suppose that first vertex after $m$ in cycle $vv_{1}mu_{1} \ldots u_{\ell}v$ that is also in $vwv_{1} \ldots v_{s}v$ is $v_{i}=u_{j}$ then $wv_{1} \ldots v_{i}u_{j-1} \ldots u_{1}w$ is a cycle in $T$ and this is not possible.\\
 Now as before no members of $A_{1}$, except $v$ can not be adjacent to $m$ in $G$, then we correspond $\{w, v_{1},m \}$ to $v$.\\
 \end{itemize}
 For other vertices in the set $\mathcal{A}_{1}$, we do corresponding
 like as the vertex $v$. Finally, we correspond for each vertices in the set $\mathcal{A}_{1}$, one set with vertices
 of the set $\mathcal{A}_{2}$, such that each set
 has two elements. For example, for $v\in\mathcal{A}_1$,
 if consider $\mathbf{case\hspace{0.1cm} I}$, then $w$ and $y$ no ones don't appear in no correspondent
  set of no elements of the set $\mathcal{A}_1$ except vertex $v$; 
  and the third element maximum appears in one another correspondent set of another element of the set
  $\mathcal{A}_1$.\\
  So, we have $3\times \vert\mathcal{A}_{1}\vert-\frac{1}{2}\times\vert\mathcal{A}_1\vert \leqslant \vert\mathcal{A}_{2}\vert$. 
  If put $\vert\mathcal{A}_{1}\vert=k ,\vert\mathcal{A}_{2}\vert=n_1$
  and 
 $\vert\mathcal{A}_{3}\vert=p$, by using of the lemma ~\ref{D}, we have $k=p+2$ and since 
 $3\times \vert\mathcal{A}_{1}\vert -\frac{\vert\mathcal{A}_1\vert}{2}\leqslant \vert\mathcal{A}_{2}\vert$, this makes $\frac{5k}{2} \leqslant n_1$.
We have \\
 \[n=p+n_1+k=k-2+n_1+k \geqslant k-2+\frac{5k}{2}+k=\frac{9k}{2}-2 \Longrightarrow k\leqslant \big\lfloor \frac{2n+4}{9}\big\rfloor .\]
 
\end{proof}
\section{Some concluding remarks}

In this section, we construct the sequence $G_m$ of $3$-regular graphs. 
For $m=1$, consider the graph $G_1$ as figure \ref{Fig1}:

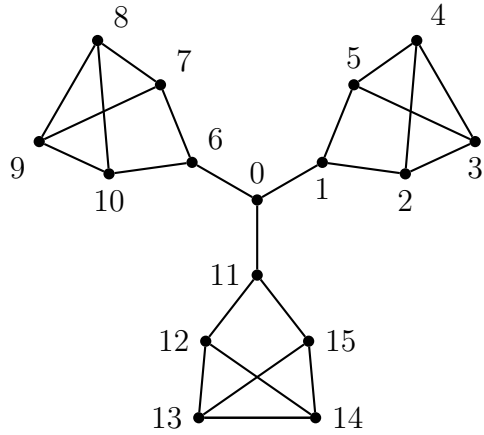
\begin{figure}[h!]
\centering
\begin{tikzpicture}[scale=.5]
\GraphInit[vstyle=Classic]
\tikzset{VertexStyle/.style = {shape = circle,fill = black, minimum size = 1pt,inner sep=1.5pt}}
\Vertex[Lpos=90,L=$0$,a=0,d=0]{0}
\Vertex[Lpos=-90,L=$1$,a=30,d=2]{1}
\Vertex[Lpos=-90,L=$2$,a=10,d=4]{2}
\Vertex[Lpos=-90,L=$3$,a=15,d=6]{3}
\Vertex[Lpos=90,L=$5$,a=50,d=4]{5}
\Vertex[Lpos=60,L=$4$,a=45,d=6]{4}
\Vertex[Lpos=30,L=$6$,a=150,d=2]{6}
\Vertex[Lpos=30,L=$7$,a=130,d=4]{7}
\Vertex[Lpos=30,L=$8$,a=135,d=6]{8}
\Vertex[Lpos=-90,L=$10$,a=170,d=4]{9}
\Vertex[Lpos=-120,L=$9$,a=165,d=6]{10}
\Vertex[Lpos=180,L=$11$,a=270,d=2]{11}
\Vertex[Lpos=180,L=$12$,a=250,d=4]{12}
\Vertex[Lpos=180,L=$13$,a=255,d=6]{13}
\Vertex[Lpos=0,L=$15$,a=290,d=4]{15}
\Vertex[Lpos=0,L=$14$,a=285,d=6]{14}
\Edges(0,1)
\Edges(0,6)
\Edges(0,11)
\Edges(1,2)
\Edges(1,5)
\Edges(2,3)
\Edges(2,4)
\Edges(3,5)
\Edges(3,4)
\Edges(4,5)
\Edges(6,7)
\Edges(6,9)
\Edges(7,8)
\Edges(7,10)
\Edges(8,10)
\Edges(8,9)
\Edges(9,10)
\Edges(11,12)
\Edges(11,15)
\Edges(12,13)
\Edges(15,13)
\Edges(15,14)
\Edges(14,12)
\Edges(14,13)
\Edges(14,13)
\end{tikzpicture}
\caption{The graph $G_1$ with $3$ branches.}
\label{Fig1}
\end{figure}

Clearly $G_1$ has spanning subtree like $T$ with $3$ Leaves and $G$ has no 
spanning subtree with less than $3$ leaves. Every part of graphe $G_1$, like, induce subgraph by the vertices set
$\{1,2,3,4,5 \}$, is called a \textit{branch}. Then, $G_1$ has 3 branches. Let $H$ be a 
branch of the graph $G_1$ with vertices  $\{ 1, 2, 3, 4,5\}$ and edge set
$\{12, 15, 23,24,34,35,45\}$. Since the edge $\{01 \}$ is a cut edge in the graph $G_1$, 
so $T$ must has a vertex with degree one in $V(H)$. Also, in every set-vertex of other branches of the garph $G_1$, the tree 
$T$ must has a vertex with degree one. So, $G_1$ is $3$-ended tree and has no spanning tree with less than 3 leaves.\\
 Now, we counteract $3$-regular graph $G_2$, consider $G_1$ and for each branch of it, like $H$ defined as before, 
 we removed two vertices $\{3 ,4 \}$ and add 8 new vertices $\{v_1, \ldots , v_8 \}$, then we construct a new 3-regular graph with $6$ branches
 as figure $2$, such that the figure \ref{Fig3}, is one of it;s three parts.
 \begin{figure}[h!] 
\centering
\begin{tikzpicture}[scale=.5]
\GraphInit[vstyle=Classic]
\tikzset{VertexStyle/.style = {shape = circle,fill = black, minimum size = 1pt,inner sep=1.5pt}}
\Vertex[Lpos=90,L=$1$,a=0,d=0]{1}
\Vertex[Lpos=-90,L=$2$,a=30,d=2]{2}
\Vertex[Lpos=-90,L=$v_1$,a=10,d=4]{v_1}
\Vertex[Lpos=-90,L=$v_2$,a=15,d=6]{v_2}
\Vertex[Lpos=90,L=$v_4$,a=50,d=4]{v_4}
\Vertex[Lpos=60,L=$v_3$,a=45,d=6]{v_3}
\Vertex[Lpos=30,L=$5$,a=150,d=2]{5}
\Vertex[Lpos=30,L=$v_5$,a=130,d=4]{v_5}
\Vertex[Lpos=30,L=$v_6$,a=135,d=6]{v_6}
\Vertex[Lpos=-90,L=$v_8$,a=170,d=4]{v_7}
\Vertex[Lpos=-120,L=$v_7$,a=165,d=6]{v_8}
\Edges(1,2)
\Edges(1,5)
\Edges(2,v_1)
\Edges(2,v_4)
\Edges(v_1,v_2)
\Edges(v_1,v_3)
\Edges(v_2,v_4)
\Edges(v_2,v_3)
\Edges(v_3,v_4)
\Edges(5,v_5)
\Edges(5,v_7)
\Edges(v_5,v_6)
\Edges(v_5,v_8)
\Edges(v_6,v_8)
\Edges(v_6,v_7)
\Edges(v_7,v_8)
\end{tikzpicture}
\caption{Two branch of the new graph $G_2$, that construct of the graph $G_1$.}
\label{Fig3}
\end{figure}
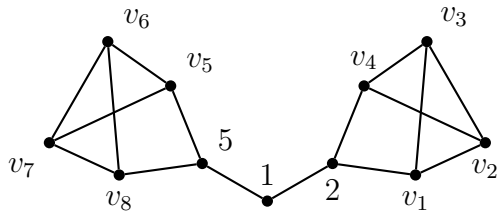
 Clearly $|G_2|=16+3\times6$ and minimum number leaves in every spanning subtree of  $G_2$ is at least $2\times 3$ and obviously $G_2$ has spanning subtree with  $2\times 3$ leaves.\\
Let the number of vertices of $G_m$ is equal $n$ and the number of branches of $G_m$ is equal $k$, then we have the following table:
\begin{table}[h!]
\centering
\begin{tabular}{cccccccccc}
\hline
$m$ & $n$ & $k$ &\\
\hline
$G_1$ &\hspace{1cm} $16$ &\hspace{1cm} $3$ &\\
$G_2$ & \hspace{1cm}$16+3\times6$ &\hspace{1cm} $2\times3$ &\\
$G_3$ &\hspace{1cm} $16+3\times6+2\times3\times6$ &\hspace{1cm} $2\times2\times3$ & \\
$\ldots$ &\hspace{1cm} $\ldots$ & \hspace{1cm}$\ldots$ & \\
$G_m$ &\hspace{1cm} $16+3\times6+\ldots+2^{m-2}\times3\times6$ &\hspace{1cm} $2^{m-1}\times3$ &  \\
\hline
\end{tabular}
\caption{}\label{tab2.3}
\end{table}

It is obvious for each $m\in\mathbb{N}$, If the number of vertices of $G_m$ is equal $n$ and the
number of branches  of $G_m$ is equal $k$, then $\frac{n+2}{6}=k$, and so $G_m$ 
is $\frac{n+2}{6}$-ended tree (such that $\frac{n+2}{6}$ is the minimum number for that $G_m$ has spanning $\frac{n+2}{6}$-ended tree).
\begin{conjecture}
There exist $n\in\mathbb{N} $ such that each $3$-regular graph with at least $n$ vertices has spanning $\lfloor \frac{n+2}{6}\rfloor $-ended tree.
\end{conjecture}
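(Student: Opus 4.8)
We read the statement as: there is $N\in\mathbb N$ such that every \emph{connected} $3$-regular graph $G$ with $|G|\geq N$ has a spanning tree with at most $\big\lfloor\frac{|G|+2}{6}\big\rfloor$ leaves (disconnected graphs have no spanning tree, and the family $G_m$ of Section~3 attains equality, so this would be best possible). The plan is to sharpen the charging argument of the theorem above. Exactly as there, fix a spanning tree $T$ of $G$ with a minimum number $k$ of leaves and partition $V(G)=\mathcal A_1\cup\mathcal A_2\cup\mathcal A_3$ by $T$-degree $1,2,3$; set $n_1=|\mathcal A_2|$. Lemma~\ref{D} gives $|\mathcal A_3|=k-2$, hence $n=k+n_1+(k-2)$, i.e.\ $n_1=n-2k+2$. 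Since $k$ is an integer, $k\leq\big\lfloor\frac{n+2}{6}\big\rfloor$ is then \emph{equivalent} to $n_1\geq 4k$. So the whole problem reduces to one clean inequality: a minimum-leaf spanning tree of a large connected $3$-regular graph has at least $4k$ vertices of $T$-degree~$2$, where $k$ is its number of leaves. The graphs $G_m$ realize $n_1=4k$ exactly, so this target is sharp and those graphs are the extremal configurations that must guide the proof.

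First I would pin down the structure at a single leaf. For $v\in\mathcal A_1$ with $T$-neighbour $u_0$, the two edges of $G-T$ at $v$ lead to vertices $w,y\in\mathcal A_2$ (by Lemma~\ref{DD} together with the exchange already used above: were the $T$-endpoint of the re-inserted edge in $\mathcal A_3$, deleting the cycle edge at it would give a spanning tree with $k-1$ leaves). Writing the two exchange cycles as $C_w=v\,w\,v_1 v_2\cdots u_0\,v$ and $C_y=v\,y\,y_1 y_2\cdots u_0\,v$, the same exchange forces $v_1,y_1\in\mathcal A_2$. I would then iterate along each cycle: deleting an interior edge $v_iv_{i+1}$ of $T+vw$ yields another spanning tree whose leaf count is compared with $k$, and identifying which deletions would create two adjacent leaves (forbidden by Lemma~\ref{DD}) confines an initial segment of each cycle to $\mathcal A_2$. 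The aim is an assignment $v\mapsto F(v)\subseteq\mathcal A_2$ with $|F(v)|\geq4$, together with a sharing rule: $w$ and $y$ are private to $v$, while any cycle-interior vertex placed in $F(v)$ is reused by at most one other leaf. (In $G_1$, the branch $\{1,2,3,4,5\}$ with leaf $5$ gives $w=1$, $y=3$, $v_1=2$, $y_1=4$, so $F(5)=\{1,2,3,4\}$ --- the whole branch, entirely private --- which is precisely the extremal picture.)

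Second comes the double count. One has $4k\leq\sum_{v\in\mathcal A_1}|F(v)|$, while each vertex of $\mathcal A_2$ is counted at most once as a private $w$ or $y$ and at most twice overall, so $4k\leq n_1+s$, where $s$ counts the shared vertices. The crux is to show $s$ is absorbed --- by leaves with $|F(v)|>4$, or by disjointness forced elsewhere in the configuration --- so that $n_1\geq 4k$ in the end. I would prove this by a case analysis of how the exchange cycles of two distinct leaves can meet, using the fact already exploited above that two such cycles cannot cross without producing a cycle inside $T$. Finally, the clean bound is only needed for $n$ large: $K_{3,3}$, the prism and the cube all possess a Hamiltonian path ($k=2$) yet fail $n_1\geq 4k$, which is exactly why the statement quantifies ``there exists $N$''; in practice one carries an additive $O(1)$ error through the count and discards it once $n\geq N$.

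The step I expect to be the real obstacle is this overlap control. The theorem only reaches $n_1\geq\tfrac{5k}{2}$ because it charges, per leaf, essentially two private plus one half-shared degree-$2$ vertex; pushing the guaranteed charge up to $4$ requires understanding how the exchange structures of \emph{different} leaves interact globally, not just the local picture at one leaf --- and the presence of triangles (as in $G_1$) shortens the exchange cycles and can make interior vertices coincide with the non-tree neighbours, so the sets $F(v)$ must be built to remain large in spite of such collisions. If the full conjecture resists, I would expect the same scheme to at least settle the triangle-free case, where every exchange cycle has length $\geq4$, the first two interior vertices of each cycle are automatically distinct from $w$ and $y$, and keeping the $F(v)$ disjoint is correspondingly easier.
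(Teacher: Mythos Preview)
The paper offers no proof of this statement: it is explicitly presented as a \emph{conjecture}, motivated by the sequence $G_m$ constructed in Section~3 but left open. So there is nothing on the paper's side to compare your proposal against.

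As for the proposal itself, the reduction is correct and useful: with $|\mathcal A_3|=k-2$ one has $n=2k+n_1-2$, so $k\le\lfloor(n+2)/6\rfloor$ is exactly $n_1\ge 4k$, and the graphs $G_m$ meet this with equality. But what you have written is a plan, not a proof. Two genuine gaps remain. First, even the \emph{local} claim that each leaf $v$ admits four distinct degree-$2$ vertices $w,y,v_1,y_1$ is not established: nothing you have argued excludes $v_1=y$ (i.e.\ $w$ and $y$ adjacent in $T$) or $y_1=w$ or $v_1=y_1$, any of which collapses $|F(v)|$ below $4$; the paper's exchange argument only tells you $v_1\in\mathcal A_2$ and that $v_1$ has no $G$-neighbour in $\mathcal A_1\setminus\{v\}$, not that $v_1$ avoids $\{w,y\}$. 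Second, and more seriously, the global ``overlap control'' --- showing that the sets $F(v)$ can be made essentially disjoint across all leaves --- is exactly the content of the conjecture, and you acknowledge yourself that you do not have it. The remark that exchange cycles of different leaves cannot ``cross without producing a cycle in $T$'' only says two $T$-paths meet in a subpath; it does not prevent the $F(v)$'s from sharing vertices. Improving the paper's charge from $\tfrac{5}{2}$ to $4$ per leaf is precisely the open problem, and your outline does not close it.
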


\end{document}